\newtheorem{thm}{Theorem}[section]
\newtheorem{lem}[thm]{Lemma}
\theoremstyle{definition}
\newtheorem{defn}[thm]{Definition}
\theoremstyle{remark}
\newtheorem{rem}[thm]{Remark}
\numberwithin{equation}{section}
\newcommand{\al}{\alpha}
\newcommand{\be}{\beta}
\newcommand{\ga}{\gamma}
\newcommand{\de}{\delta}
\newcommand{\ep}{\varepsilon}
\newcommand{\la}{\lambda}
\newcommand{\ro}{\rho}
\newcommand{\Si}{\Sigma}
\newcommand{\va}{\varphi}
\newcommand{\csi}{\xi}
\newcommand{\x}{\times}
\newcommand{\CC}{\mathcal C}
\newcommand{\imm}{{\mathrm {Imm}}}
\newcommand{\Z}{\mathbb Z}
\newcommand{\R}{\mathbb R}
\newcommand{\RP}{{\mathbb R}{P}}
\newcommand{\del}{\partial}
\newcommand{\co}{\colon\thinspace}
\begin{document}
\mathsurround=1pt 
\title{Fold cobordisms and stable homotopy groups}

\thanks{
2000 {\it Mathematics Subject Classification.} Primary 57R45; Secondary 57R75, 57R42, 55Q45.\\
{\it Key words and phrases.} Cobordism, fold singularity, fold map, immersion, stable homotopy group.
}   

\thanks{The author is supported by Canon Foundation in Europe}

\author{Boldizs\'ar Kalm\'{a}r}

\address{Kyushu University, Faculty of Mathematics, 6-10-1 Hakozaki, Higashi-ku, Fukuoka 812-8581, Japan}

%\address{E\"{o}tv\"{o}s Lor\'{a}nd University, P\'{a}zm\'{a}ny P\'{e}ter
%s\'{e}t\'{a}ny 1/c.  H-1117 Budapest, Hungary}
\email{kalmbold@yahoo.com}

\begin{abstract}
We prove that for $n \geq 1$ and $q > 0$ the (oriented) cobordism group
of fold maps of (oriented) $(n+q)$-dimensional manifolds into $\R^n$
contains the direct sum  
of $\lfloor (q+1)/2 \rfloor$ copies of the
$(n-1)$th stable homotopy group of spheres
%$\bigoplus_0^{\lfloor (q-1)/2 \rfloor} \pi_{n-1}^s$
%of $(n-1)$th stable homotopy groups of spheres 
as a direct summand.
We also prove that for $k \geq 1$ and $q = 2k -1$ the cobordism group of fold maps
of unoriented $(n+q)$-dimensional manifolds into $\R^n$ also 
contains the $n$th
stable homotopy group of the space $\RP^{\infty}$ as
a direct summand. 
We have the analogous results about bordism groups of fold maps as well.
\end{abstract}

%\thanks{The author would like to thank Osamu Saeki and Andr\'{a}s Sz\H{u}cs for the useful 
%questions and conversations.}

\maketitle

\begin{spacing}{1.3}

\section{Introduction}\label{s:intro}

In the study of singular maps one basic approach is classifying singular maps 
under the equivalence relation {\it cobordism} and describing {\it cobordism groups} of singular maps
\cite{An, Ik, Ko, RSz, Szucs1, Sz1, Sz2, Szucs2, Sz3, Szucs4}.

This paper is about fold maps of 
$(n+q)$-dimensional manifolds into oriented $n$-dimensional manifolds. 
Fold maps of $(n+q)$-dimensional manifolds into $n$-dimensional manifolds have the formula 
$
f(x_1, \ldots, x_{n+q})=(x_1, \ldots, x_{n-1}, \pm x_n^2 \pm \cdots \pm x_{n+q}^2 )
$
as a local form around each singular point, and  
the subset of the singular points in the source manifold is a $(q+1)$-codimensional submanifold.
Moreover, if we restrict a fold map to the set of its singular points, then we obtain a
codimension one immersion into the target manifold of the fold map.
These properties give the possibility to define simple geometrical invariants of 
cobordisms of 
fold maps (see Definition~\ref{cobdef}) via
immersions of the singular sets.
And by \cite{We} these geometrical invariants show a strong relation with 
stable homotopy groups of Thom spaces of vector bundles like the 
circle $S^1$ and the projective space
$\RP^{\infty}$.
Furthermore, it turns out that these geometrical invariants detect 
an important torsion part of the cobordism group of fold maps,
namely the stable homotopy groups of spheres and the space $\RP^{\infty}$.

In \cite{An} Ando showed that the cobordism group of fold maps
of oriented $n$-dimensional manifolds into the $n$-dimensional 
Euclidean space $\R^n$ is isomorphic
to the $n$th stable homotopy group of spheres. 
In this paper we are looking for similar results in the case of
the cobordism group
of fold maps of $(n+q)$-dimensional manifolds into $\R^n$, where
$q > 0$.
The case $n=1$ (i.e., the cobordism group of Morse functions) has been computed by Ikegami \cite{Ik}. 

By recent results of Ando \cite{An2}, Sadykov \cite{Sad2} and Sz\H{u}cs \cite{Szucs4},
the cobordism groups of singular maps are equal to homotopy groups of spectra.
Moreover, the rank, the torsion part for sufficiently high primes (and
estimations for arbitrary primes) of the homotopy groups of these spectra can be computed 
in special and important cases in positive codimension \cite{Szucs4}
and very probably in negative codimension \cite{Kal3, Sad3}.
Our results in this paper show that computing the $p$-torsion of 
the cobordism groups of singular maps could be extremely difficult and hopeless for
a small prime $p$.
We note that in special dimension pairs \cite{EkSzuTer} computes the cobordism groups of
fold maps completely.

The paper is organized as follows.
In Section~\ref{s:jelol} we give several basic definitions and notations.
In Section~\ref{mainthms} we state our main results.
In Section~\ref{mainthmproof} we prove our main theorems.
In Section~\ref{bord} we give analogous results about bordism groups of fold maps.

The author would like to thank Prof. Andr\'as Sz\H{u}cs and Prof. Osamu Saeki for
the helpful discussions and corrections, and
 for suggesting to the author some special Morse functions.

\subsection{Notations}
In this paper the symbol ``$\amalg$'' denotes the disjoint union, 
for any number $x$ the symbol ``$\lfloor x \rfloor$'' denotes the greatest
integer $i$ such that $i \leq x$,
$\ga^1$ denotes the universal line bundle over $\RP^{\infty}$ 
and the symbol $\ep^1$ denotes the trivial line bundle over a point.
The symbol $\imm^{\csi^k}(n-k,k)$ denotes
the cobordism group of $k$-codimensional immersions into $\R^{n}$
whose normal bundles can be induced from the vector bundle $\csi^k$ (this
group is isomorphic to $\pi_{n}^s(T\csi^k)$ where $T\csi^k$ denotes 
 the Thom space of the bundle $\csi^k$ \cite{We}).
The symbol $\imm(n-k,k)$ denotes
the cobordism group $\imm^{\ga^k}(n-k,k)$ where
$\ga^k$ is the universal bundle for $k$-dimensional real vector bundles.
The symbol $\pi_n^s(X)$ ($\pi_n^s$) denotes the $n$th stable homotopy group of the space $X$ (resp. spheres).
The symbol ``id$_A$'' denotes the identity map of the space $A$.
The symbol $\ep$ refers to a small positive number.
All manifolds and maps are smooth of class $C^{\infty}$.

\section{Preliminaries}\label{s:jelol}

\subsection{Fold maps}
 
Let $n \geq 1$ and $q > 0$.
Let $Q^{n+q}$ and $N^n$ be smooth manifolds of dimensions $n+q$ and $n$ 
respectively. Let $p \in Q^{n+q}$ be a singular point of 
a smooth map $f \co Q^{n+q} \to N^{n}$. The smooth map $f$  has a {\it fold 
singularity of index $\la$} at the singular point $p$ if we can write $f$ in some local coordinates around $p$  
and $f(p)$ in the form 
\[  
f(x_1,\ldots,x_{n+q})=(x_1,\ldots,x_{n-1}, -x_n^2 - \cdots -x_{n+\la-1}^2 + x_{n+\la}^2 + \cdots x_{n+q}^2)
\] 
for some $\la$ $(0 \leq \la \leq q+1)$ (the index $\la$ is well-defined if
we consider that $\la$ and $q+1-\la$ represent the same index). 

A smooth map $f \co Q^{n+q} \to N^{n}$ is called a {\it fold map} if $f$ has only 
fold singularities.

A smooth map $f \co Q^{n+q} \to N^n$ 
  has a {\it definite fold
singularity} at a fold singularity $p \in Q^{n+q}$ if $\la = 0$ or $\la = q+1$,
otherwise $f$ has an {\it indefinite fold singularity of index $\la$}
at the fold singularity $p \in Q^{n+q}$.

Let $S_{\la}(f)$ denote the set of fold singularities of index $\la$ of $f$ in $Q^{n+q}$.
Note that $S_{\la}(f) = S_{q+1-\la}(f)$.
%and $S_0(f)$ denote the set of definite fold singularities of $f$ in $Q^{n+q}$.
 Let $S_f$ denote the set $\bigcup_{\la} S_{\la}(f)$.

%$S_0(f) \cup S_1(f)$.
Note that the set $S_f$ is an ${(n-1)}$-dimensional submanifold of the manifold
$Q^{n+q}$.

Note that each connected component of the manifold $S_f$ has its own index $\la$ if
we consider that $\la$ and $q+1-\la$ represent the same index. 

Note that for a fold map $f \co Q^{n+q} \to \R^{n}$ and for an index $\la$ ($0 \leq \la \leq \lfloor (q-1)/2 \rfloor$ or
 $q+1-\lfloor (q-1)/2 \rfloor \leq \la \leq q+1$)
the codimension one immersion $f \mid_{S_{\la}(f)} \co S_{\la}(f) \to \R^n$ 
of the singular set of index $\la$ $S_{\la}(f)$ has a canonical framing 
(i.e., trivialization of the normal bundle) by identifying canonically the set of 
fold singularities of index $\la$ 
($0 \leq \la \leq \lfloor (q-1)/2 \rfloor$ or
 $q+1-\lfloor (q-1)/2 \rfloor \leq \la \leq q+1$)
of the map $f$ with   
the fold germ 
$(x_1,\ldots,x_{n+q}) \mapsto (x_1,\ldots,x_{n-1}, -x_n^2 - \cdots -x_{n+\la-1}^2 + x_{n+\la}^2 + \cdots x_{n+q}^2)$
$(0 \leq \la \leq \lfloor (q-1)/2 \rfloor)$
(if
we consider that $\la$ and $q+1-\la$ represent the same index), see, for example, \cite{Sa1}.

If $f \co Q^{n+q} \to N^n$ is a fold map in general position, then 
the map $f$
restricted to the singular set $S_f$ is a general positional
 codimension one immersion  into the target manifold $N^n$.
 
Since every fold map is in general position after a small perturbation, 
and we study maps under the equivalence relations {\it cobordism} and {\it bordism} 
(see Definitions~\ref{cobdef} and \ref{borddef} respectively),
in this paper we can restrict ourselves to studying fold maps which are 
in general position.
Without mentioning we suppose that a fold map $f$ is in general position.

\subsection{Stein factorization}
 
We use the notion of the Stein factorization of a smooth map 
$f\co Q^q \to N^n$, where $Q^q$ and $N^n$ are smooth manifolds 
of dimensions $q$ and $n$ respectively $(q \geq n)$. 
Two points  $p_1,p_2 \in Q^q$ are {\it equivalent} if  
$p_1$ and $p_2$ lie on the same component of an $f$-fiber.
Let $W_f$ denote the quotient space of $Q^q$ with respect
to this equivalence relation and $q_f \co Q^q \to W_f$ the quotient map.
Then there exists a unique continuous map $\Bar{f} \co W_f \to N^n$ such that
%the diagram
%\begin{center}
%\begin{graph}(6,2)
%\graphlinecolour{1}\grapharrowtype{2}
%\textnode {A}(1,1.5){$Q^q$}
%\textnode {B}(5, 1.5){$N^n$}
%\textnode {C}(3, 0){$W_f$}
%\diredge {A}{B}[\graphlinecolour{0}]
%\diredge {C}{B}[\graphlinecolour{0}]
%\diredge {A}{C}[\graphlinecolour{0}]
%\freetext (3,1.8){$f$}
%\freetext (1.4, 0.6){$q_f$}
%\freetext (4.4,0.6){$\Bar f$}
%\end{graph}
%\end{center}
%commutes or in other words
$f = \Bar{f} \circ q_f$. The space $W_f$ or the factorization of the 
map $f$ into the composition of $q_f$ and $\Bar{f}$ is called the {\it Stein
factorization} of the map $f$. We call the map $\Bar{f}$ the  
{\it Stein factorization} of the map $f$ as well. Note that if $f$ is a 
generic smooth map of a closed 
$q$-dimensional manifold into $N^n$ (for example, if $f$ is a fold map in general position),
 then its Stein factorization $W_f$ is a  
compact $n$-dimensional CW complex.

\subsection{Cobordisms of fold maps}\label{kob}

\begin{defn}\label{cobdef} (Cobordism) 
Two fold maps $f_i \co Q_i^{n+q} \to N^n$ $(i=0,1)$  
from closed (oriented) $({n+q})$-dimensional manifolds $Q_i^{n+q}$ $(i=0,1)$ 
into an $n$-dimensional manifold $N^n$ are  
{\it (oriented) cobordant} if 
\begin{enumerate}[a)]
\item
there exists a fold map 
$F \co X^{n+q+1} \to N^n \times [0,1]$ from a compact (oriented) $(n+q+1)$-dimensional 
manifold $X^{n+q+1}$, %with 
\item
$\del X^{n+q+1} = Q_0^{n+q} \amalg (-)Q_1^{n+q}$ and %such that 
\item
${F \mid}_{Q_0^{n+q} \x [0,\ep)}=f_0 \x
{\mathrm {id}}_{[0,\ep)}$ and ${F \mid}_{Q_1^{n+q} \x (1-\ep,1]}=f_1 \x 
{\mathrm {id}}_{(1-\ep,1]}$, where 
$Q_0^{n+q} \x [0,\ep)$
 and $Q_1^{n+q} \x (1-\ep,1]$ are small collar neighbourhoods of $\del X^{n+q+1}$ with the
identifications $Q_0^{n+q} = Q_0^{n+q} \x \{0\}$ and $Q_1^{n+q} = Q_1^{n+q} \x \{1\}$. 
%where $\ep$  refers to a small positive number. 
\end{enumerate}

We call the map $F$ a {\it cobordism} between $f_0$ and $f_1$.
\end{defn} 
This clearly defines an equivalence relation on the set of fold maps 
from closed (oriented) $({n+q})$-dimensional manifolds into an  
$n$-dimensional manifold $N^n$.

Let us denote the (oriented) cobordism classes of fold maps into the Euclidean space $\R^n$ under (oriented) fold cobordisms 
by $\CC ob_{f}^{(O)}(n+q,-q)$. We can define a commutative group operation in the usual way
on this set of (oriented) cobordism classes by the far away disjoint union.

\begin{rem}
Ikegami and Saeki \cite{IS} showed
that the group $\CC ob_f^O(2,-1)$ is isomorphic to $\Z$. 
The author proved that 
$\CC ob_f(2,-1) = \Z \oplus \Z_2$ \cite{Kal}
 and the group $\CC ob_f^O(3,-1)$ is isomorphic to $\Z_2 \oplus \Z_2$ \cite{Kal2}. 
Ikegami computed the 
cobordism groups $\CC ob_{f}^{(O)}(p,1-p)$ of Morse functions of $p$-dimensional manifolds for every $p > 0$ \cite{Ik}.

For $k \geq 0$ there are many results concerning the cobordism groups ${\mathcal Cob}_{\tau}(q, k)$,
where $\tau$ is a set of singularity types, the elements of the group ${\mathcal Cob}_{\tau}(q, k)$ 
are cobordism classes of smooth maps of $q$-dimensional manifolds
into $\R^{q+k}$ with only singularities in $\tau$, and a cobordism between two 
such maps has only singularities in $\tau$. See, for example, \cite{An, EkSzuTer, Ko, RSz, Szucs1, Sz1, Sz2, Szucs2, Sz3, Szucs4}.  
\end{rem}

\subsection{Main results}\label{mainthms}

Now we are ready to state our main theorems.

Recall that the oriented cobordism group of immersions of $(n-1)$-dimensional 
manifolds into $\R^n$ denoted in this paper by $\imm^{\ep^1}(n-1,1)$ is isomorphic to the 
stable homotopy group $\pi_{n-1}^s$ \cite{We}.
In the following theorems we identify the group $\imm^{\ep^1}(n-1,1)$ with the group $\pi_{n-1}^s$.

\begin{thm}\label{ori}%(Oriented case)
For $n \geq 1$ and $q > 0$ 
the cobordism group $\CC ob_{f}^{(O)}(n+q,-q)$ of 
fold maps of (oriented) $(n+q)$-dimensional manifolds into $\R^n$
contains the direct sum  $\bigoplus_0^{\lfloor (q-1)/2 \rfloor} \pi_{n-1}^s$
%of stable homotopy groups of spheres $\pi_{n-1}^s$
 as a direct summand.
 
 This direct sum $\bigoplus_0^{\lfloor (q-1)/2 \rfloor} \pi_{n-1}^s$
 is detected by the homomorphisms $\va_0, \ldots ,\va_{\lfloor (q-1)/2 \rfloor}$,
 where $\va_j \co \CC ob_{f}^{(O)}(n+q,-q) \to \imm^{\ep^1}(n-1,1)$ maps 
 a fold cobordism class $[f]$ to the cobordism class of the framed immersion of the
 singular set of index $j$ of the fold map $f$ $(j = 0, \ldots,\lfloor (q-1)/2 \rfloor)$.
\end{thm}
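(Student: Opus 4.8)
The plan is to split the homomorphism $\Phi = (\va_0,\dots,\va_{\lfloor (q-1)/2\rfloor})\co \CC ob_{f}^{(O)}(n+q,-q)\to\bigoplus_0^{\lfloor (q-1)/2\rfloor}\imm^{\ep^1}(n-1,1)$. First one checks that each $\va_j$ (for $0\le j\le\lfloor(q-1)/2\rfloor$) is well defined and additive: the restriction $f\mid_{S_j(f)}$ is a codimension one immersion carrying the canonical framing recalled in Section~\ref{s:jelol}, hence represents an element of $\imm^{\ep^1}(n-1,1)$; for a fold cobordism $F\co X^{n+q+1}\to\R^n\x[0,1]$ between $f_0$ and $f_1$ the set $S_j(F)$ is a compact manifold with $\del S_j(F)=S_j(f_0)\amalg(-)S_j(f_1)$ (the maps $f_i\x\mathrm{id}$ have the same fold indices as $f_i$, which together with the collar condition identifies the boundary), and $F\mid_{S_j(F)}$ with its canonical framing is a framed immersed cobordism; and $S_j$ of a disjoint union is the disjoint union of the $S_j$'s. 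Since a framed codimension one immersion has a canonically oriented domain, $\imm^{\ep^1}(n-1,1)$ is the correct target in the oriented and the unoriented case alike. It therefore suffices to construct a homomorphism $s\co\bigoplus_0^{\lfloor(q-1)/2\rfloor}\imm^{\ep^1}(n-1,1)\to\CC ob_{f}^{(O)}(n+q,-q)$ with $\Phi\circ s$ an isomorphism; then $\Phi$ is a split epimorphism and $\bigoplus_0^{\lfloor(q-1)/2\rfloor}\imm^{\ep^1}(n-1,1)=\bigoplus_0^{\lfloor(q-1)/2\rfloor}\pi_{n-1}^s$ is a direct summand detected by $\va_0,\dots,\va_{\lfloor(q-1)/2\rfloor}$.

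For the section I use a spinning construction over Morse functions. Let $g\co M^{n-1}\to\R^n$ be a framed codimension one immersion; the framing gives an immersion $\Bar{g}\co M\x[-1,1]\to\R^n$ of the normal interval bundle. For any Morse function $h\co P^{q+1}\to[-1,1]$ on a closed oriented $(q+1)$-manifold put
\[
f_{g,h}\co M\x P\to\R^n,\qquad f_{g,h}=\Bar{g}\circ(\mathrm{id}_M\x h).
\]
Working in local coordinates in which $\Bar{g}$ is a diffeomorphism onto an open set, one verifies that $f_{g,h}$ is a fold map with $S_\mu(f_{g,h})=M\x\{\,\text{critical points of }h\text{ of index }\mu\,\}$, and that over a critical point $y$ of Morse index $\mu$ the component $M\x\{y\}$ is carried by the immersion $\Bar{g}(\cdot,h(y))$, which is regularly homotopic to $g$, with canonical framing equal to that of $g$ when $\mu<q+1-\mu$ and to its reverse when $\mu>q+1-\mu$ (the local form of $f_{g,h}$ at such a point is the index-$\mu$ fold germ, and the reversal records passing to the representative with fewer negative squares). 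Writing $c_\mu(h)$ for the number of critical points of $h$ of index $\mu$ we obtain, for $0\le i\le\lfloor(q-1)/2\rfloor$ (so $i<q+1-i$),
\[
\va_i([f_{g,h}])=\big(\,c_i(h)-c_{q+1-i}(h)\,\big)\cdot[g]\in\imm^{\ep^1}(n-1,1).
\]
When $M$ and $P$ are oriented so is $M\x P$ and the construction respects orientations, so it applies verbatim in both cases; moreover a framed immersed cobordism spins to a fold cobordism and spinning commutes with disjoint union, so $[g]\mapsto[f_{g,h}]$ is a well-defined homomorphism.

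It remains to choose the Morse functions. For each $j$ with $0\le j\le\lfloor(q-1)/2\rfloor$ let $P=S^{q+1}$ and let $h_j$ be obtained from the standard height function (one critical point of index $0$, one of index $q+1$) by the birth of a cancelling pair of critical points of indices $j$ and $j+1$; this leaves $S^{q+1}$ unchanged and is possible for every such $j$. The critical points of $h_j$ then have indices $0,q+1,j,j+1$ (with multiplicity), and a direct count gives $c_i(h_j)-c_{q+1-i}(h_j)=\delta_{ij}+\delta_{i,j+1}$ for $0\le i\le\lfloor(q-1)/2\rfloor$. Hence, defining $s$ on the $j$-th summand by $[g]\mapsto[f_{g,h_j}]$, the composite $\Phi\circ s$ is given by the integer matrix $A=(c_i(h_j)-c_{q+1-i}(h_j))_{i,j}$ with $A_{ij}=\delta_{ij}+\delta_{i,j+1}$; this matrix is lower triangular with $1$'s on the diagonal, hence invertible over $\Z$, so $s\circ A^{-1}$ is a genuine section of $\Phi$. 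This proves the theorem.

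The delicate step is the second paragraph: checking that the spun map $f_{g,h}$ really is a fold map whose singular set of each index is the stated framed immersion, and in particular pinning down the sign in $\va_i([f_{g,h}])=(c_i(h)-c_{q+1-i}(h))[g]$ by comparing the local form of $f_{g,h}$ near a critical point of $h$ with the canonical index-$\mu$ fold germ and its canonical trivialization. Granting this, the choice of the Morse functions $h_j$ is an elementary application of the reversibility of handle cancellation.
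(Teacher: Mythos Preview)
Your proposal is correct and follows essentially the same route as the paper's proof. Both construct the section by spinning a framed immersion $g\co M^{n-1}\to\R^n$ over a Morse function $h_j\co S^{q+1}\to(-\ep,\ep)$ with exactly four critical points of indices $0,j,j+1,q+1$, and both conclude by observing that the resulting composite $\Phi\circ s$ is given by the lower--triangular unipotent matrix $(a_0,a_1,\dots,a_m)\mapsto(a_0,a_0+a_1,a_1+a_2,\dots)$, hence is an isomorphism; your formula $\va_i([f_{g,h}])=(c_i(h)-c_{q+1-i}(h))[g]$ makes explicit the framing sign that the paper records more informally (in its $j=0$ case it notes that the three index-$0$ fold components consist of two with the framing of $g$ and one with the opposite framing, and for $j>0$ it simply observes that $S_j(f_j)$ has a single component equal to $g$).
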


\begin{thm}\label{unori}%(Unoriented case)
For $n \geq 1$, $q > 0$,
$k \geq 1$ and
 $q = 2k -1$ the cobordism group $\CC ob_{f}(n+q,-q)$ 
 of 
fold maps of unoriented $(n+q)$-dimensional manifolds into $\R^n$
contains the direct sum $\pi_{n}^s(\RP^{\infty}) \oplus \bigoplus_0^{\lfloor (q-1)/2 \rfloor} \pi_{n-1}^s$
as a direct summand.

The direct summand $\pi_{n}^s(\RP^{\infty})$ is detected by the homomorphism
$\psi \co \CC ob_{f}(n+q,-q) \to \imm^{}(n-1,1)$, where $\psi$ maps 
a fold cobordism class $[f]$ to the cobordism class of the immersion of the
 singular set of index $k$ of the fold map $f$. 
\end{thm}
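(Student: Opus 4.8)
The plan is to construct, for each of the relevant indices, an explicit splitting homomorphism $\CC ob_f^{(O)}(n+q,-q) \to \imm^{\ep^1}(n-1,1)$ (and one extra homomorphism to $\imm(n-1,1)$ in the unoriented case), together with a section realizing every target class by an actual fold map, so that the composite is the identity. For Theorem~\ref{ori} the homomorphisms are the $\va_j$ already described in the statement: given a fold map $f\co Q^{n+q}\to\R^n$ in general position, restrict $f$ to the singular stratum $S_j(f)$ of index $j$ (for $0\le j\le\lfloor(q-1)/2\rfloor$). This is a codimension-one immersion into $\R^n$, and it carries the canonical framing described in the Preliminaries, so it represents a well-defined class in $\imm^{\ep^1}(n-1,1)\cong\pi_{n-1}^s$. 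The first thing I would check carefully is that $\va_j$ is well-defined on cobordism classes: a fold cobordism $F\co X^{n+q+1}\to\R^n\times[0,1]$ restricts on its index-$j$ singular stratum $S_j(F)$ to a framed immersed cobordism (of one dimension higher) between $f\mid_{S_j(f_0)}$ and $f\mid_{S_j(f_1)}$, which requires that the canonical framing be compatible across the cobordism — this is essentially the observation, already cited to \cite{Sa1}, that the index-$j$ fold germ has a canonical model, so the framing is natural. Additivity of $\va_j$ under disjoint union is immediate.

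The core of the argument is the construction of the section. For a fixed index $j$ and a class $\al\in\imm^{\ep^1}(n-1,1)$, represented by a framed immersion $g\co P^{n-1}\looparrowright\R^n$, I would build a fold map $f_{j,\al}\co Q^{n+q}\to\R^n$ whose only singular stratum is $S_j(f_{j,\al})$ and whose restriction to that stratum is exactly the framed immersion $g$, while $\va_i(f_{j,\al})=0$ for $i\ne j$ (automatically, since there are no other singularities). The standard device: thicken $P$ using its framed normal line bundle to get a tubular neighbourhood, and over it place the local fold model of index $j$ — concretely, take the product of $g$ (with its normalization) with the fiberwise quadratic form $-y_1^2-\cdots-y_j^2+y_{j+1}^2+\cdots+y_{q+1}^2$, glued along the boundary to a trivial non-singular piece so that $Q$ is closed. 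The framing of $g$ is precisely what is needed to perform this gluing globally. In the oriented case one checks the resulting $Q$ is orientable (the quadratic-form factor and the framing trivialization make the normal data trivial enough to transport an orientation). Then $\va_j(f_{j,\al})=\al$ by construction and $\va_i(f_{j,\al})=0$ for $i\ne j$, so the map $(\va_0,\ldots,\va_{\lfloor(q-1)/2\rfloor})$ admits a right inverse $\sum_j f_{j,\al_j}$ (far-away disjoint union), exhibiting $\bigoplus_0^{\lfloor(q-1)/2\rfloor}\pi_{n-1}^s$ as a direct summand.

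For Theorem~\ref{unori}, note that when $q=2k-1$ the ``middle'' index $k$ satisfies $k=q+1-k$, so $S_k(f)$ is the one singular stratum \emph{not} covered by the $\va_j$'s and its normal bundle in $Q^{n+q}$ does not carry a canonical framing — the local model in index $k$ has a $\Z_2$ ambiguity coming from the symmetry of the quadratic form $-y_1^2-\cdots-y_k^2+y_{k+1}^2+\cdots+y_{2k}^2$. This is exactly why the target of $\psi$ is the unframed immersion group $\imm(n-1,1)\cong\pi_n^s(T\ga^1)=\pi_n^s(\RP^\infty)$ (using $\RP^\infty=T\ga^1$ up to the relevant stable dimension, as recorded in the Notations). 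I would define $\psi([f])$ to be the unoriented immersion cobordism class of $f\mid_{S_k(f)}$, with its normal line bundle classified by the $\Z_2$-data of the index-$k$ germ; well-definedness under cobordism is the same restriction-to-the-singular-stratum argument, now without a framing to track. The section is analogous: given a line bundle $L\to P^{n-1}$ and an immersion $g\co P\looparrowright\R^n$ with normal bundle $L$, build a fold map whose index-$k$ stratum is $g$ with this normal bundle, by the same thickening-and-capping construction using the fiberwise quadratic form of signature $0$. Finally, since the $\va_j$ for $0\le j\le\lfloor(q-1)/2\rfloor$ and $\psi$ detect disjoint singular strata, the combined homomorphism $(\psi,\va_0,\ldots,\va_{\lfloor(q-1)/2\rfloor})$ is split surjective, giving the claimed summand.

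**Main obstacle.** The delicate point is the global gluing in the section construction: producing a \emph{closed} source manifold $Q^{n+q}$ (orientable, in the first theorem) carrying a fold map whose singular set realizes the prescribed (framed, resp. line-bundle-normal) immersion and nothing else. One must cap off the ``definite-fold collar'' surrounding the thickened singular stratum with a fibered, singularity-free piece — morally, the fibers over the Stein factorization must close up to honest closed manifolds. I expect this to rely on the fact that, away from the middle index, the fold fiber is a sphere (or a product involving a sphere), which bounds, so the capping exists; keeping track of orientations, and in Theorem~\ref{unori} keeping track of the $\Z_2$ framing ambiguity so that the normal bundle of the capped stratum is exactly $L$, is where the real care is needed.

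\end{spacing}
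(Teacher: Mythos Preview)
Your overall strategy---define the $\va_j$ (and $\psi$) by restricting to singular strata, build sections, and show the composite is an isomorphism---is exactly the paper's strategy, and your well-definedness arguments for $\va_j$ and $\psi$ are correct. The genuine gap is in your section construction.

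You propose to build, for each $j>0$, a fold map $f_{j,\al}\co Q^{n+q}\to\R^n$ with $Q$ \emph{closed} whose \emph{only} singular stratum has index $j$, by capping the local fold model with a ``trivial non-singular piece''. This is impossible. If $Q$ is closed and nonempty then $f(Q)$ is compact; at a generic point of the frontier of $f(Q)$ in $\R^n$ the fibre passes from nonempty to empty, and an indefinite fold (index $0<\la<q+1$) performs a surgery that leaves the fibre nonempty on both sides. Hence the frontier forces \emph{definite} fold singularities, so $S_0(f_{j,\al})\neq\varnothing$ and $\va_0(f_{j,\al})$ need not vanish. The same obstruction kills the analogous ``only index $k$'' section for $\psi$. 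Your ``Main obstacle'' paragraph touches this point but misdiagnoses it: the capping does not merely require care, it necessarily introduces extra singular strata.

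The paper fixes this not by aiming for a diagonal composite but by accepting a \emph{triangular} one. For $\al_j$ it takes $Q=M^{n-1}\times S^{q+1}$ and uses a Morse function $h_j\co S^{q+1}\to(-\ep,\ep)$ with exactly four critical points of indices $0,j,j+1,q+1$; the resulting fold map has the desired index-$j$ stratum equal to $g$, together with unavoidable strata of indices $0$ and $j+1$. One then checks $\va_j\circ\al_j=\mathrm{id}$ directly, and computes the full composite $\ro=(\va_0,\ldots)\circ(\al_0+\cdots)$ to be
\[
\ro(a_0,a_1,\ldots,a_m)=(a_0,\,a_1+a_0,\,a_2+a_1,\ldots,\,a_m+a_{m-1}),
\]
which is unipotent upper-triangular, hence an isomorphism. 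For Theorem~\ref{unori} the section $\be$ uses a Morse function on $\RP^{2k}$ with critical points of every index $0,\ldots,2k$ and a $\Z_2$-symmetry $\de$ with $r\circ\de=-r$, twisted by the normal line bundle of $g$; the extra strata of indices $j<k$ then come in pairs (from indices $j$ and $2k-j$) forming a double cover that bounds, so $\va_j\circ\be=0$, while $\psi\circ\be=\mathrm{id}$. The combined matrix $\tau$ is again triangular and invertible. So the repair is not to eliminate the extra singularities but to control them enough to get an invertible matrix.
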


Analogous results about bordism groups of fold maps
can be found in Section~\ref{bord}.

\section{Proof of main theorems}\label{mainthmproof}

\begin{proof}[Proof of Theorem~\ref{ori}]
%First let us prove that the group $\CC ob_{f}^{O}(n+q,-q)$
%contains one summand $\pi_{n-1}^s$.

For $0 \leq j \leq \lfloor (q-1)/2 \rfloor$ let us define homomorphisms 
$\al_j \co \imm^{\ep^1}(n-1,1) \to \CC ob_{f}^{(O)}(n+q,-q)$
and $\va_j \co \CC ob_{f}^{(O)}(n+q,-q) \to \imm^{\ep^1}(n-1,1)$ such that
the compositions $\va_j \circ \al_j$ are the identity maps as follows.

%Let us define two homomorphisms $\al_0 \co \imm^{\ep^1}(n-1,1) \to \CC ob_{f}^{(O)}(n+q,-q)$
%and $\va_0 \co \CC ob_{f}^{(O)}(n+q,-q) \to \imm^{\ep^1}(n-1,1)$ such that
%the composition $\va_0 \circ \al_0$ is the identity map as follow.

%Let $h_0 \co S^{q+1} \to (-\ep,\ep)$ be a Morse function of the $(q+1)$-dimensional sphere into the
%open interval $(-\ep,\ep)$
%with four critical points of index 
%$0, 0, 1, q+1$, respectively, such that the critical value of index $1$ is zero in the interval $(-\ep,\ep)$.
%Let $h_0$ be the Morse function $- \tilde h_0$.

Let $j$ be an integer such that $0 \leq j \leq \lfloor (q-1)/2 \rfloor$. 
Let $h_j \co S^{q+1} \to (-\ep,\ep)$ be a Morse function of the $(q+1)$-dimensional sphere into the
open interval $(-\ep,\ep)$
with four critical points of index 
$0, j, j+1, q+1$, respectively, such that the critical value of index $j$ is zero in the interval $(-\ep,\ep)$.

Let $j$ be an integer such that $0 \leq j \leq \lfloor (q-1)/2 \rfloor$.
\begin{defn}\label{alfadef}
Let $[g \co M^{n-1} \to \R^{n}]$ be an element of the group $\imm^{\ep^1}(n-1,1)$.
We identify the framed fibers of the normal bundle of the immersion $g$ with the interval $(-\ep,\ep)$.
Let $f_j \co M^{n-1} \x S^{q+1} \to \R^{n}$ be a
fold map such that for every $p \in M^{n-1}$ the map $f_j \mid_{\{p\} \x S^{q+1}}$ maps the sphere $\{p\} \x S^{q+1}$ 
as the Morse function $h_j$ onto the framed fiber $(-\ep,\ep)$ over the point $g(p)$ in $\R^n$ of
the normal bundle of the immersion $g$. Let $\al_j([g])$ be the cobordism class of the fold map $f_j$.
In the case of a cobordism between two elements $[g_i]$ $(i=0,1)$ of the group $\imm^{\ep^1}(n-1,1)$ we
can do the same construction in order to obtain a cobordism between the fold cobordism classes $\al_j[g_i]$ $(i=0,1)$.
Therefore this clearly defines a homomorphism  $\al_j \co \imm^{\ep^1}(n-1,1) \to \CC ob_{f}^{(O)}(n+q,-q)$.
\end{defn}

%Note that for a fold map $f \co Q^{n+q} \to \R^{n}$ and for an index $\la$ ($0 \leq \la \leq (q-1)/2$ or
% $q+1-(q-1)/2 \leq \la \leq q+1$)
%the codimension one immersion $f \mid_{S_{\la}(f)} \co S_{\la}(f) \to \R^n$ 
%of the singular set of index $\la$ $S_{\la}(f)$ has a canonical framing 
%(i.e., trivialization of the normal bundle) by identifying canonically the set of 
%fold singularities of index $\la$ of the map $f$ with   
%the fold germ 
%$(x_1,\ldots,x_{n+q}) \mapsto (x_1,\ldots,x_{n-1}, -x_n^2 - \cdots -x_{n+\la-1}^2 + x_{n+\la}^2 + \cdots x_{n+q}^2)$
%$(0 \leq \la \leq (q-1)/2)$
%(if
%we consider that $\la$ and $q+1-\la$ represent the same index).

\begin{defn}\label{fidef}
Let $[f \co Q^{n+q} \to \R^{n}]$ be an element of the group $\CC ob_{f}^{(O)}(n+q,-q)$.
Then let $\va_j([f])$ be the cobordism class of the framed immersion $f \mid_{S_j(f)} \co S_j(f) \to \R^n$ 
of the singular set of index $j$
of the fold map $f$. 
This clearly defines a homomorphism  $\va_j \co  \CC ob_{f}^{(O)}(n+q,-q) \to \imm^{\ep^1}(n-1,1)$.
\end{defn}

\begin{lem}
The composition 
\[
\begin{CD}
\imm^{\ep^1}(n-1,1) @> \al_j >> \CC ob_{f}^{(O)}(n+q,-q) @> \va_j >> \imm^{\ep^1}(n-1,1)
\end{CD}
\]
is the identity map.
\end{lem}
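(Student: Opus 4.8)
The plan is to unwind both constructions and check that, starting from a framed immersion $g\co M^{n-1}\to\R^n$ representing a class of $\imm^{\ep^1}(n-1,1)$, the composite $\va_j\circ\al_j$ returns $g$ with its framing. First I would use that the $\ep^1$-framing trivializes the normal bundle of $g$, so a tubular neighbourhood of $g$ provides an immersion $G\co M^{n-1}\x(-\ep,\ep)\to\R^n$ with $G\mid_{M^{n-1}\x\{0\}}=g$ which sends $\{p\}\x(-\ep,\ep)$ onto the framed normal fibre over $g(p)$. The fold map $f_j$ of Definition~\ref{alfadef} is then exactly $G\circ(\mathrm{id}_{M^{n-1}}\x h_j)$; since $G$ is an immersion, $f_j$ is indeed a fold map, and its singular set is $M^{n-1}\x\mathrm{Crit}(h_j)$, the index-$\la$ part being $M^{n-1}$ times the critical points of $h_j$ of index $\la$.

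Next I would identify $S_j(f_j)$ and the framed immersion $f_j\mid_{S_j(f_j)}$. For $j\ge 1$ the Morse function $h_j$ has a unique critical point $x_j$ of index $j$ (none of the indices $0,j+1,q+1$ represents the index $j$, using $1\le j\le\lfloor(q-1)/2\rfloor$ and that $\la$ and $q+1-\la$ represent the same index), so $S_j(f_j)=M^{n-1}\x\{x_j\}$; and because $h_j(x_j)=0$, the map $f_j\mid_{S_j(f_j)}$ sends $(p,x_j)$ to the point at height $0$ in the normal fibre over $g(p)$, i.e.\ to $g(p)$, so $f_j\mid_{S_j(f_j)}=g$. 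By the Morse lemma, in suitable coordinates $h_j$ near $x_j$ equals $-y_1^2-\cdots-y_j^2+y_{j+1}^2+\cdots+y_{q+1}^2$, so near $S_j(f_j)$ the map $f_j$ is precisely the standard index-$j$ fold germ with distinguished target direction along the framed normal fibre of $g$; therefore the canonical framing of $S_j(f_j)$ coincides with the framing of $g$, and $\va_j(\al_j[g])=[g]$.

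For $j=0$ the Morse function $h_0$ has critical points of index $0,0,1,q+1$, so $S_0(f_0)$ consists of three copies of $g$ (two from the minima, lying at height $0$, and one from the maximum). The canonical framing of a definite fold points into its image, so the two minima each contribute the framed immersion $g$ while the maximum contributes $g$ with the reversed framing; since reversing a codimension-one framing acts as $-1$ on $\imm^{\ep^1}(n-1,1)\cong\pi_{n-1}^s$, one again gets $[g]+[g]-[g]=[g]$. Finally, every step of this construction is natural in $g$, so an immersed framed cobordism between two representatives yields, by the same recipe, a fold cobordism between the corresponding fold maps $f_j$; this shows that $\al_j$ and $\va_j$ are well defined on cobordism classes and that $\va_j\circ\al_j=\mathrm{id}$ on all of $\imm^{\ep^1}(n-1,1)$.

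I expect the framing comparison to be the main obstacle: one has to check carefully that the local model defining $f_j$ really reproduces the standard fold germ near $S_j(f_j)$ with the canonical framing direction matching the $\ep^1$-framing of $g$, and, for $j=0$, that the signed contribution of the three definite components of $S_0(f_0)$ really collapses to $[g]$. Everything else — that $f_j$ is a genuine fold map and that the singular sets behave well under the cobordism construction — is routine verification.
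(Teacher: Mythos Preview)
Your argument is correct and follows essentially the same route as the paper: for $j\ge 1$ you and the paper both observe that $h_j$ has a unique critical point of index $j$ (after the identification $\la\sim q+1-\la$), so $S_j(f_j)$ is a single copy of $M^{n-1}$ immersed by $g$ with the original framing; for $j=0$ you and the paper both identify $S_0(f_0)$ as three parallel copies of $g$, two with the given framing and one with the reversed framing. The only stylistic difference is in how the cancellation is argued in the $j=0$ case: you invoke the algebraic fact that reversing the codimension-one framing negates the class in $\imm^{\ep^1}(n-1,1)\cong\pi_{n-1}^s$, whereas the paper writes down an explicit null-cobordism of the oppositely framed pair using the Stein factorization $W_{f_0}=M^{n-1}\times W_{h_0}$ (with $W_{h_0}$ a ``$\Yup$''). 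One small slip: only one of the two minima of $h_0$ is specified to lie at height $0$; the other two definite critical points give parallel translates of $g$, not $g$ itself, but this is harmless since parallel translates are framed cobordant.
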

\begin{proof}
First let us prove the case $j = 0$.
Let $[g \co M^{n-1} \to \R^{n}]$ be an element of the group $\imm^{\ep^1}(n-1,1)$.
Then the framed immersion $\va_0(\al_0([g]))$ constructed in Definitions~\ref{alfadef} and \ref{fidef} 
has three components which are parallel translations of the immersion $g$, moreover
two of these components have the same framing and the third component has an opposite framing.
The statement of the lemma follows if we note that the sum of
two parallel translations with opposite framing among these three components 
represents the zero cobordism class in
the group  $\imm^{\ep^1}(n-1,1)$. A null-cobordism can be given easily by considering  
the Stein factorization $\bar f_0 \co W_{f_0} \to \R^{n}$ of 
the fold map $f_0 \co M^{n-1} \x S^{q+1} \to \R^{n}$ in Definition~\ref{alfadef} in the case of $j = 0$.
The CW complex $W_{f_0}$ is equal to $M^{n-1} \x W_{h_0}$, where $W_{h_0}$ is the source ``$\Yup$'' of the Stein factorization
$\bar h_0 \co W_{h_0} \to (-\ep,\ep)$
of the Morse function $h_0$, and
the Stein factorization $\bar f_0$ maps the subset $M^{n-1} \x \{t\}$ of $W_{f_0}$ ($t \in W_{h_0}$) 
to $\R^{n}$ as a parallel translation of the immersion $g$.

%\begin{figure}[ht] 
%\begin{center} 
%\psfrag{a}{${\si_{\infty}}$}
%\psfrag{b}{$s_{\infty}$}
%\epsfig{file=javfig6.eps, height=5cm} 
%\caption{The simple fiber-singularity $\si_{\infty} \co s_{\infty} \to J$.}
%\end{center} 
%\label{st1A}  
%\end{figure} 

%\begin{picture}(150,150)
%\thicklines
%\put(40,40){\line(-1,-1){40}}
%\put(40,40){\line(1,-1){40}}
%\put(40,40){\line(0,1){60}}
%\put(60,40){\vector(1,0){20}}
%\put(100,30){\vector(0,1){100}}
%\end{picture}

When $j > 0$, 
clearly by the constructions in Definitions~\ref{alfadef} and \ref{fidef} 
for a given element $[g]$ of the group $\imm^{\ep^1}(n-1,1)$ 
the framed immersion of the singular set of index $j$ of the fold map
$f_j \co M^{n-1} \x S^{q+1} \to \R^n$ equals to $g$. Hence
$\va_j(\al_j([g])) = [g]$ for every element $[g]$ of the group $\imm^{\ep^1}(n-1,1)$.
%Details are left to the reader.
\end{proof}

By the above lemma in order to complete the proof of Theorem~\ref{ori}
we have to show that the composition 
\[
\begin{CD}
\bigoplus_0^{\lfloor (q-1)/2 \rfloor} \imm^{\ep^1}(n-1,1) @>  \al_0 + \cdots +\al_{\lfloor (q-1)/2 \rfloor}  >> \\ \CC ob_{f}^{(O)}(n+q,-q) @> \bigoplus_{j=0}^{\lfloor (q-1)/2 \rfloor} \va_j >> \bigoplus_0^{\lfloor (q-1)/2 \rfloor} \imm^{\ep^1}(n-1,1)
\end{CD}
\]
denoted by $\ro$ is an isomorphism. 
But the homomorphism $\ro$ is clearly injective and surjective since 
by the definition of the homomorphisms $\al_j$ and $\va_j$ $(0 \leq j \leq \lfloor (q-1)/2 \rfloor)$
the homomorphism
$\ro$ has the form
\[
\ro ( a_0, a_1, \ldots ,a_{\lfloor (q-1)/2 \rfloor}) = ( a_0, a_1+a_0, a_2+a_1, \ldots, a_{\lfloor (q-1)/2 \rfloor}+a_{\lfloor (q-3)/2 \rfloor}).
\]

This completes the proof of Theorem~\ref{ori}.
\end{proof}

\begin{proof}[Proof of Theorem~\ref{unori}]
Let us define two 
homomorphisms $\be \co \imm(n-1,1) \to \CC ob_{f}(n+q,-q)$
and $\psi \co \CC ob_{f}^{}(n+q,-q) \to \imm^{}(n-1,1)$ such that
the composition $\psi \circ \be$ is the identity map as follows.

Let $r \co \RP^{2k} \to (-\ep,\ep)$ be a Morse function of the 
$(2k)$-dimensional projective space into the open interval $(-\ep,\ep)$
such that $r$ has $2k+1$ critical points with indeces $0, \ldots, 2k$, respectively.
We can choose the Morse function $r$ such that there exists a diffeomorphism $\de$ of $\RP^{2k}$ 
with the properties 
$r \circ \de = -r$, $\de \circ \de = {\mathrm {id}}_{\RP^{2k}}$, and 
that the critical value of index $k$ of the function $r$  is $0$ in 
the interval $(-\ep,\ep)$.

\begin{defn}\label{betadef}
Let $[g \co M^{n-1} \to \R^{n}]$ be an element of the group $\imm^{}(n-1,1)$, i.e., 
the normal bundle $\nu(g)$ of the immersion $g$ is a line bundle induced from
the universal line bundle $\ga^1 \to \RP^{\infty}$ which we identify
with $(-\ep,\ep) \x_{\Z_2} S^{\infty} \to \RP^{\infty}$ in the usual way.
Let us induce the normal bundle $\nu(g)$ and 
a manifold $M^{n-1} \bar \x \RP^{2k}$ by considering the commutative diagram 
\[
\begin{CD}
M^{n-1} \bar \x \RP^{2k} @>>> \RP^{2k} \x_{\Z_2} S^{\infty} \\
@VV {\bar r} V  @VV r \x_{\Z_2} {\mathrm {id_{S^{\infty}}}} V \\
\nu(g) @>>> (-\ep,\ep) \x_{\Z_2} S^{\infty} \\
@VVV @VVV \\
M^{n-1} @>>> \RP^{\infty}
\end{CD}
\]
where the $\Z_2$ action on $\RP^{2k}$ %
is the action of the diffeomorphism $\de$ (see also \\ \cite[Example~3.7]{Sa1}).
By the diagram above we obtain a fold map 
$f' = \mu \circ \bar r \co M^{n-1} \bar \x \RP^{2k} \to \R^n$,
where $\mu$ denotes the immersion of the normal bundle $\nu(g)$ into $\R^n$.
Note that for every $p \in M^{n-1}$ the map $f' \mid_{\{p\} \x \RP^{2k}}$ maps the space 
$\{p\} \x \RP^{2k}$
as the Morse function $\pm r$ onto the fiber $(-\ep,\ep)$ over the point $g(p)$ in $\R^n$ of
the normal bundle of the immersion $g$.

Let $\be([g])$ be the cobordism class of the fold map 
$f' \co M^{n-1} \bar \x \RP^{2k} \to \R^{n}$.

In the case of a cobordism between two elements $[g_i]$ $(i=0,1)$ of the group $\imm^{}(n-1,1)$ we
can do the same construction in order to obtain a cobordism between the elements $\be[g_i]$ $(i=0,1)$.
Therefore this clearly defines a homomorphism  $\be \co \imm^{}(n-1,1) \to \CC ob_{f}^{}(n+q,-q)$.
\end{defn}

Note that for any $[g] \in \imm^{}(n-1,1)$ the cobordism classes $\va_j( \be( [g] ))$ 
are zero in the group $\imm^{\ep^1}(n-1,1)$ $(0 \leq j \leq \lfloor (q-1)/2 \rfloor)$
since for the fold map $f'$ in Definition~\ref{betadef}
the immersion 
$f' \mid_{S_j(f')} \co S_j(f') \to \R^n$ $(0 \leq j \leq \lfloor (q-1)/2 \rfloor)$ is a double covering of the immersion 
$f' \mid_{S_k(f')} \co S_k(f') \to \R^n$ and hence it is a boundary of the immersion
of the normal bundle $\nu(f')$ into $\R^n$
(this can be seen easily by considering the Stein factorization of the fold map 
$f' \co M^{n-1} \bar \x \RP^{2k} \to \R^n$).

\begin{defn}\label{psidef}
Let $[f \co Q^{n+q} \to \R^{n}]$ be an element of the group $\CC ob_{f}(n+q,-q)$.
Let $\psi([f])$ be the cobordism class of the immersion $f \mid_{S_k(f)} \co S_k(f) \to \R^n$ 
where $S_k(f)$ denotes the indefinite singular set of index $k$ of the fold map $f$. 
This clearly defines a homomorphism  $\psi \co  \CC ob_{f}^{}(n+q,-q) \to \imm^{}(n-1,1)$.
\end{defn}

\begin{lem}
The composition 
\[
\begin{CD}
\imm^{}(n-1,1) @> \be >> \CC ob_{f}^{}(n+q,-q) @> \psi >> \imm^{}(n-1,1)
\end{CD}
\]
is the identity map.
\end{lem}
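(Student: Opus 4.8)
The plan is to verify directly that $\psi(\be([g])) = [g]$ for every $[g] \in \imm(n-1,1)$, unwinding Definitions~\ref{betadef} and \ref{psidef}. Given $[g \co M^{n-1} \to \R^n]$, the construction of $\be([g])$ produces the fold map $f' = \mu \circ \bar r \co M^{n-1} \bar\times \RP^{2k} \to \R^n$. First I would identify the singular set $S_k(f')$ of index $k$. Since for each $p \in M^{n-1}$ the restriction $f'\mid_{\{p\}\times \RP^{2k}}$ is (up to sign) the Morse function $r$, whose unique critical point of index $k$ carries critical value $0$, the fiberwise index-$k$ critical points sweep out a submanifold of $M^{n-1} \bar\times \RP^{2k}$ which maps onto $M^{n-1}$; I claim this map is a diffeomorphism, because the $\Z_2$-action via $\de$ used to form $M^{n-1}\bar\times\RP^{2k}$ fixes the index-$k$ critical point (as $r\circ\de = -r$ sends it to the index-$k$ critical point of $-r$, i.e. itself, the unique critical point at level $0$). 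Hence $S_k(f')$ is identified with $M^{n-1}$.

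Next I would check that under this identification the immersion $f'\mid_{S_k(f')} \co S_k(f') \to \R^n$ agrees with $g$, together with the correct normal line bundle. Because $r$ has critical value $0$ there, the index-$k$ critical locus sits over the zero-section of $\nu(g)$, so $f'$ restricted to it is exactly $\mu$ composed with the zero-section, which is $g$. For the normal bundle: $S_k(f')$ is a codimension-one submanifold of $M^{n-1}\bar\times\RP^{2k}$, and its normal line bundle inside the total space, pushed forward, is the line bundle $\nu(g)$ induced from $\ga^1$ — this is visible from the commutative diagram in Definition~\ref{betadef}, where the left column exhibits $M^{n-1}\bar\times\RP^{2k}$ as fibered over $M^{n-1}$ with the $\RP^{2k}$-factor twisted by the same classifying map $M^{n-1}\to\RP^\infty$ that induces $\nu(g)$. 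Thus $f'\mid_{S_k(f')}$ represents precisely the class $[g] \in \imm(n-1,1)$, so $\psi(\be([g])) = [g]$.

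Finally, one must confirm this is compatible with cobordisms, but that is immediate: Definition~\ref{betadef} already explains that a cobordism between $[g_0]$ and $[g_1]$ produces, by the same fiberwise construction, a fold cobordism between $\be([g_0])$ and $\be([g_1])$, and the index-$k$ singular locus of that cobordism is (by the same fixed-point argument) the original immersed cobordism between $g_0$ and $g_1$; so $\psi\circ\be$ is well-defined and equals the identity on the nose. The only genuinely delicate point is the claim that the index-$k$ critical locus is a single sheet diffeomorphic to $M^{n-1}$ rather than a double cover — i.e. that the $\Z_2$-twisting does not split or branch it — and this is exactly where the hypotheses $r\circ\de = -r$, $\de\circ\de = \mathrm{id}_{\RP^{2k}}$, and "critical value of index $k$ equals $0$" are used; I would spell out that the fixed point set of $\de$ on the relevant fiber is exactly the index-$k$ critical point. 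Everything else is routine bookkeeping with the Stein factorization, as already indicated in the remark following Definition~\ref{betadef}.
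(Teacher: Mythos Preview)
Your proposal is correct and follows exactly the same approach as the paper, which simply asserts that by the constructions in Definitions~\ref{betadef} and \ref{psidef} the immersion of the index-$k$ singular set of $f'$ equals $g$, with details left to the reader. You have supplied precisely those details---in particular the observation that $\de$ fixes the unique index-$k$ critical point (since $r\circ\de=-r$ forces index $\la\mapsto 2k-\la$), so $S_k(f')$ is a single copy of $M^{n-1}$ rather than a double cover.
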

\begin{proof}
Clearly by the constructions in Definitions~\ref{betadef} and \ref{psidef} 
for a given element $[g]$ of the group $\imm^{}(n-1,1)$ 
the immersion of the indefinite singular set of index $k$ of the fold map
$f' \co M^{n-1} \bar \x \RP^{2k} \to \R^n$ equals to $g$. Hence
$\psi(\be([g])) = [g]$ for every element of the group $\imm^{}(n-1,1)$.
Details are left to the reader.
\end{proof}

The proof of Theorem~\ref{unori} will be finished if 
we show that 
the composition 
\[
\begin{CD}
\bigoplus_0^{\lfloor (q-1)/2 \rfloor} \imm^{\ep^1}(n-1,1) \oplus \imm^{}(n-1,1) @>  \al_0 + \cdots +\al_{\lfloor (q-1)/2 \rfloor} + \be >> \\ \CC ob_{f}^{}(n+q,-q) @> \bigoplus_{j=0}^{\lfloor (q-1)/2 \rfloor} \va_j \oplus \psi >> \\ \bigoplus_0^{\lfloor (q-1)/2 \rfloor} \imm^{\ep^1}(n-1,1) \oplus \imm^{}(n-1,1)
\end{CD}
\]
denoted by $\tau$ is an isomorphism. 
But the homomorphism $\tau$ is clearly injective and surjective since 
by the definition of the homomorphisms $\al_j$, $\va_j$ $(0 \leq j \leq \lfloor (q-1)/2 \rfloor)$, $\be$ and $\psi$
the homomorphism 
$\tau$ has the form
\[
\tau ( a_0, a_1, \ldots ,a_{\lfloor (q-1)/2 \rfloor}, b) = ( a_0, a_1+a_0, a_2+a_1, \ldots, a_{\lfloor (q-1)/2 \rfloor}+a_{\lfloor (q-3)/2 \rfloor}, b+\iota (a_{\lfloor (q-1)/2 \rfloor})),
\]
where $\iota \co \imm^{\ep^1}(n-1,1) \to \imm^{}(n-1,1)$ denotes the natural forgetting homomorphism.

This completes the proof of Theorem~\ref{unori}.
\end{proof}

\section{Bordisms of fold maps}\label{bord}

There is also a very geometrical equivalence relation
on the set of fold maps $f \co Q^{n+q} \to N^n$ 
from closed (oriented) $({n+q})$-dimensional manifolds $Q^{n+q}$ 
into closed oriented $n$-dimensional manifolds $N^n$.

\begin{defn}\label{borddef} (Bordism)
Two fold maps $f_i \co Q_i^{n+q} \to N_i^n$ $(i=0,1)$  
from closed (oriented) ${(n+q)}$-dimensional manifolds $Q_i^{n+1}$ $(i=0,1)$ 
into closed oriented $n$-dimensional manifolds $N_i^n$ $(i=0,1)$ are  
{\it (oriented) bordant} if 
\begin{enumerate}[a)]
\item
there exists a fold map 
$F \co X^{n+q+1} \to Y^{n+1}$ from a compact (oriented) $(n+q+1)$-dimensional 
manifold $X^{n+q+1}$ to a compact oriented $(n+1)$-dimensional 
manifold $Y^{n+1}$,
\item
$\del X^{n+q+1} = Q_0^{n+q} \amalg (-)Q_1^{n+q}$,
$\del Y^{n+1} = N_0^{n+1} \amalg -N_1^{n+1}$ and
\item
${F \mid}_{Q_0^{n+q} \x [0,\ep)}=f_0 \x
id_{[0,\ep)}$ and ${F \mid}_{Q_1^{n+q} \x (1-\ep,1]}=f_1 \x id_{(1-\ep,1]}$, where $Q_0^{n+q} \x [0,\ep)$
 and $Q_1^{n+q} \x (1-\ep,1]$ are small collar neighbourhoods of $\del X^{n+q+1}$ with the
identifications $Q_0^{n+q} = Q_0^{n+q} \x \{0\}$, $Q_1^{n+q} = Q_1^{n+q} \x \{1\}$. 
%where $\ep$  refers to a small positive number.
\end{enumerate}

We call the map $F$ a {\it bordism} between $f_0$ and $f_1$.
\end{defn}

We can define a commutative group operation on the set of bordism classes 
by $[f_0] + [f_1] = f_0 \amalg f_1 \co  Q_0^{n+q} \amalg Q_1^{n+q} \to N_0^n \amalg N_1^n$
in the usual way.

By changing the target manifold $\R^n$ to an arbitrary oriented $n$-dimensional manifold $N^n$ in the
previous chapters we obtain analogous theorems about the bordism group of fold maps denoted by
$Bor_f^{(O)}(n+q, -q)$.

Let $B\imm^{\csi^k}(n-k,k)$ denote the usual bordism group
of $k$-codimensional immersions into closed oriented $n$-manifolds, whose normal bundles
can be induced from the vector bundle $\csi^k$. Note that this group
$B\imm^{\csi^k}(n-k,k)$ is isomorphic to the $n$th oriented bordism group 
$\Omega_{n}(\Gamma_{\csi^k})$
of the classifying space $\Gamma_{\csi^k}$ for such immersions \cite{Sz1}.

Then we have the following theorems which are analogues of Theorems~\ref{ori} and \ref{unori}.

\begin{thm}
For $n \geq 1$ and $q > 0$ 
the bordism group $Bor_{f}^{(O)}(n+q,-q)$ of 
fold maps of (oriented) $(n+q)$-dimensional manifolds into closed oriented $n$-dimensional 
manifolds
contains the
direct sum  $\bigoplus_0^{\lfloor (q-1)/2 \rfloor} B\imm^{\ep^1}(n-1,1)$
 as a direct summand.
\end{thm}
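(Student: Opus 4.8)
The plan is to mimic verbatim the proof of Theorem~\ref{ori}, replacing the target $\R^n$ everywhere by an arbitrary closed oriented $n$-dimensional manifold $N^n$, and replacing the immersion cobordism group $\imm^{\ep^1}(n-1,1)$ by the immersion bordism group $B\imm^{\ep^1}(n-1,1)$. First I would define, for each integer $j$ with $0 \leq j \leq \lfloor (q-1)/2 \rfloor$, a homomorphism $\al_j \co B\imm^{\ep^1}(n-1,1) \to Bor_f^{(O)}(n+q,-q)$ exactly as in Definition~\ref{alfadef}: given a framed immersion $g \co M^{n-1} \to N^n$ representing a class in $B\imm^{\ep^1}(n-1,1)$, identify its framed normal fibers with $(-\ep,\ep)$ and let $f_j \co M^{n-1} \x S^{q+1} \to N^n$ be the fold map sending each $\{p\}\x S^{q+1}$ onto the fiber over $g(p)$ by the fixed Morse function $h_j \co S^{q+1} \to (-\ep,\ep)$ with four critical points of indices $0,j,j+1,q+1$. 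Since a bordism of framed immersions into $N^n \x [0,1]$ gives, by the same fiberwise construction, a fold bordism in the sense of Definition~\ref{borddef}, this is a well-defined homomorphism. Second, I would define $\va_j \co Bor_f^{(O)}(n+q,-q) \to B\imm^{\ep^1}(n-1,1)$ as in Definition~\ref{fidef}, sending $[f]$ to the bordism class of the canonically framed immersion $f\mid_{S_j(f)}$ of the index-$j$ singular set; the canonical framing exists by the discussion of fold germs in Section~\ref{s:jelol}, and restricting a fold bordism $F$ to $S_j(F)$ gives an immersion bordism into $N^n \x [0,1]$, so $\va_j$ is well-defined.

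Next I would prove the analogue of the Lemma: the composition $\va_j \circ \al_j$ is the identity on $B\imm^{\ep^1}(n-1,1)$. For $j > 0$ this is immediate from the constructions, since the index-$j$ singular set of $f_j = f_j(g)$ is precisely a copy of $g$ with its framing. For $j = 0$ the index-$0$ singular set consists of three parallel copies of $g$, two with one framing and one with the opposite framing; the cobordism/bordism relation $a_0 + (-a_0) = 0$ combined with the explicit null-bordism coming from the Stein factorization $W_{f_0} = M^{n-1} \x W_{h_0}$ (where $W_{h_0}$ is the $\Yup$-shaped Stein space of $h_0$) shows the class is just $a_0$. Assembling these, the composition $\bigl(\bigoplus_j \va_j\bigr) \circ \bigl(\sum_j \al_j\bigr)$ on $\bigoplus_0^{\lfloor (q-1)/2 \rfloor} B\imm^{\ep^1}(n-1,1)$ is, by the same bookkeeping as in the proof of Theorem~\ref{ori}, the upper-triangular map
\[
( a_0, a_1, \ldots, a_{\lfloor (q-1)/2 \rfloor}) \longmapsto ( a_0,\, a_1+a_0,\, a_2+a_1,\, \ldots,\, a_{\lfloor (q-1)/2 \rfloor}+a_{\lfloor (q-3)/2 \rfloor}),
\]
which is an isomorphism (its inverse is obtained by alternating partial sums). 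Hence $\sum_j \al_j$ is a split injection and its image is a direct summand isomorphic to $\bigoplus_0^{\lfloor (q-1)/2 \rfloor} B\imm^{\ep^1}(n-1,1)$, which is the assertion.

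The one point that genuinely needs care, rather than routine translation, is checking that the fiberwise construction of $\al_j$ actually produces a smooth fold map into $N^n$ and, more importantly, that a bordism of the base immersion produces an admissible fold bordism in the sense of Definition~\ref{borddef} — in particular that the total space $X^{n+q+1}$ is a manifold with the correct boundary and that $F$ has only fold singularities along the whole bordism, with the collar condition (c) respected. This is essentially local over $N^n$ and identical to the $\R^n$ case treated in Section~\ref{mainthmproof}, since the construction only uses a tubular neighborhood of the immersion, so I expect no real obstacle; the rest is verbatim the argument for Theorem~\ref{ori} with $B\imm^{\ep^1}(n-1,1) \cong \Omega_n(\Gamma_{\ep^1})$ in place of $\imm^{\ep^1}(n-1,1) \cong \pi_{n-1}^s$.
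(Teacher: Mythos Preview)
Your proposal is correct and follows exactly the route the paper indicates: the paper's own proof is the single sentence ``The proofs are analogous to the proofs of Theorems~\ref{ori} and \ref{unori}, respectively,'' and you have simply spelled out that analogy in detail. One small slip: in the bordism setting of Definition~\ref{borddef} the target is allowed to vary, so the bordisms verifying well-definedness of $\al_j$ and $\va_j$ live in a general compact oriented $(n+1)$-manifold $Y^{n+1}$ rather than in $N^n \times [0,1]$; this does not affect the argument.
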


\begin{thm}
For $n \geq 1$, $q > 0$,
$k \geq 1$ and
 $q = 2k -1$ the 
bordism group $Bor_{f}(n+q,-q)$ of 
fold maps of unoriented $(n+q)$-dimensional manifolds into 
closed oriented $n$-dimensional 
manifolds
contains the
direct sum $B\imm^{\ga^1}(n-1,1) \oplus \bigoplus_0^{\lfloor (q-1)/2 \rfloor} B\imm^{\ep^1}(n-1,1)$
as a direct summand.
\end{thm}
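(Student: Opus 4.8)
The plan is to run the argument used to prove Theorem~\ref{unori} with the target $\R^n$ replaced throughout by an arbitrary closed oriented $n$-manifold $N^n$, and the immersion cobordism groups replaced by the corresponding bordism groups (recall the group operation on $Bor_{f}(n+q,-q)$ is disjoint union of both sources and targets). Since $q = 2k-1$ we have $\lfloor (q-1)/2 \rfloor = k-1$. For each $j$ with $0 \le j \le k-1$ I would define a homomorphism $\al_j \co B\imm^{\ep^1}(n-1,1) \to Bor_{f}(n+q,-q)$ exactly as in Definition~\ref{alfadef}, and a homomorphism $\be \co B\imm^{\ga^1}(n-1,1) \to Bor_{f}(n+q,-q)$ exactly as in Definition~\ref{betadef}: given an immersion $g \co M^{n-1} \to N^n$ with suitably structured normal line bundle, one inserts the Morse function $h_j$ (resp.\ $r$) fibrewise over $M^{n-1}$ inside a tubular neighbourhood of $g(M^{n-1})$. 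The point to observe is that this construction is purely local in the target: it uses only the total space of $\nu(g)$, so it makes sense over any oriented $n$-manifold and carries bordisms of $g$ to bordisms of the resulting fold map. Hence $\al_j$ and $\be$ are well-defined homomorphisms.

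In the other direction I would define $\va_j \co Bor_{f}(n+q,-q) \to B\imm^{\ep^1}(n-1,1)$, for $0 \le j \le k-1$, by sending $[f \co Q^{n+q} \to N^n]$ to the bordism class of the canonically framed codimension-one immersion $f\mid_{S_j(f)} \co S_j(f) \to N^n$; the canonical framing is the one described in Section~\ref{s:jelol}, which is intrinsic to the fold germ and so unaffected by the change of target. Similarly I would define $\psi \co Bor_{f}(n+q,-q) \to B\imm^{\ga^1}(n-1,1)$ by $[f] \mapsto [f\mid_{S_k(f)} \co S_k(f) \to N^n]$, the normal line bundle of $S_k(f)$ being classified by a map to $\RP^{\infty}$. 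These are visibly homomorphisms, and, just as in the lemmas of Section~\ref{mainthmproof}, one has $\va_j \circ \al_j = \mathrm{id}$ and $\psi \circ \be = \mathrm{id}$ (the index-$j$, resp.\ index-$k$, singular set of the model fold map built from $g$ is $g$ again), while $\va_j \circ \be = 0$ for all $j \le k-1$, since the index-$j$ singular set of the fold map produced by $\be$ is a double cover of its index-$k$ singular set and bounds the immersed normal bundle of that fold map; all three facts are read off from the Stein factorization exactly as over $\R^n$.

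Finally I would form the composite $\tau$ of $\al_0 + \cdots + \al_{k-1} + \be$ with $\bigl(\bigoplus_{j=0}^{k-1} \va_j\bigr) \oplus \psi$ on $\bigoplus_0^{\lfloor (q-1)/2 \rfloor} B\imm^{\ep^1}(n-1,1) \oplus B\imm^{\ga^1}(n-1,1)$. By the identities above, $\tau$ has the triangular form
\[
\tau(a_0, \ldots, a_{\lfloor (q-1)/2 \rfloor}, b) = \bigl( a_0,\, a_1 + a_0,\, \ldots,\, a_{\lfloor (q-1)/2 \rfloor} + a_{\lfloor (q-3)/2 \rfloor},\, b + \iota(a_{\lfloor (q-1)/2 \rfloor}) \bigr),
\]
where $\iota \co B\imm^{\ep^1}(n-1,1) \to B\imm^{\ga^1}(n-1,1)$ is the forgetful homomorphism; hence $\tau$ is an isomorphism and the displayed direct sum splits off $Bor_{f}(n+q,-q)$ as a direct summand. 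The only genuinely non-formal point, and the step I would be most careful about, is verifying that the fibrewise insertion of $h_j$ and $r$ globalises correctly over a bordism of $g$ when the target is an arbitrary oriented $n$-manifold rather than $\R^n$; but since everything happens inside tubular neighbourhoods the local model is identical to the one already used for $\R^n$, so no new difficulty arises.
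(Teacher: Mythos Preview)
Your proposal is correct and is exactly the argument the paper has in mind: the paper does not give a separate proof of this theorem but simply says ``The proofs are analogous to the proofs of Theorems~\ref{ori} and \ref{unori}, respectively,'' and what you have written is precisely that analogy carried out, including the same triangular form for $\tau$ and the same use of the Stein factorization to see $\va_j \circ \be = 0$.
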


%\begin{rem}
%We have geometrical invariants which detect the appropriate stable 
%homotopy groups in the case of cobordisms and
%their analogues in the case of bordisms.
%Namely the homomorphisms $\va_0 \co  \CC ob_{f}^{(O)}(n+q,-q) \to \imm^{\ep^1}(n-1,1)$
%and $\psi \co  \CC ob_{f}^{}(n+2k-1,1-2k) \to \imm^{}(n-1,1)$
%which map a class $[f]$ to the immersion of its definite singular set and
%to the immersion of its indefinite singular set of index $k$, respectively,
%are the geometrical invariants in the case of cobordisms of fold maps.
%And in the case of bordisms we have appropriate homomorphisms 
%$Bor_{f}^{(O)}(n+q,-q) \to \Omega_{n}(\Gamma_{\ep^1})$ and
%$Bor_{f}^{}(n+2k-1,1-2k) \to \Omega_{n}(\Gamma_{\ga^1})$
%as invariants.
%\end{rem}

The proofs are analogous to the proofs of Theorems~\ref{ori} and \ref{unori}, respectively.

\end{spacing}

\begin{thebibliography}{AAA}

\bibitem{An2}
Y. Ando,
Cobordisms of maps without prescribed singularities,
arXiv:math.GT/0412234v1.


\bibitem{An}
{Y. Ando},
{Stable homotopy groups of spheres and higher singularities},
J. Math. Kyoto Univ. {\bf 46} (2006) 147--165.

%\bibitem{GiWi}
%{C. G. Gibson, K. Wirthm\"uller, A. A. du Plessis and E. J. N. Looijenga},
%{Topological stability of smooth mappings}, 
%Lect. Notes in Math. {\bf 552}, Springer-Verlag, 1976.

%\bibitem{CoFl}
%P. E. Conner and E. E. Floyd,
%Differentiable periodic maps,
%Springer-Verlag, 1964.

\bibitem{EkSzuTer}
T. Ekholm, A. Sz\H{u}cs and T. Terpai,
Cobordisms of fold maps and maps with prescribed number of cusps,
arXiv:math.GT/0701433v1.


%\bibitem{Hu}
%J. F. Hughes,
%Bordism and regular homotopy of low-dimensional immersions,
%Pacific J. Math. {\bf 156} (1992), no. 1, 155--184.

\bibitem{Ik}
{K. Ikegami},
{Cobordism group of Morse functions on manifolds},
Hiroshima Math. J. {\bf 34} (2004), 211--230.


\bibitem{IS}
{K. Ikegami and O. Saeki},
{Cobordism group of Morse functions on surfaces}, J. Math. Soc. Japan {\bf 55}
(2003), 1081--1094.


%\bibitem{Jan}
%{ K. J\"anich},
%{Symmetry Properties of Singularities of $C^\infty$-Functions}, Math. Ann.
%{\bf 238} (1978), 147--156. 


\bibitem{Kal}
{B. Kalm\'ar},
{Cobordism group of Morse functions on unoriented surfaces},
Kyushu J. Math. {\bf 59} (2005), 351--363.


\bibitem{Kal2}
{B. Kalm\'ar},
{Cobordism group of fold maps of oriented 3-manifolds into the plane},
to appear.

\bibitem{Kal3}
{B. Kalm\'ar},
Pontryagin-Thom type construction for negative codimensional singular maps,
arXiv:math.GT/0612116v1.

%\bibitem{Kal3}
%{B. Kalm\'ar},
%Pontrjagin-Thom construction for singular maps with negative codimension,
%degree-thesis, 2005.



\bibitem{Ko}
{U. Koschorke},
{Vector fields and other vector bundle morphisms - a singularity approach},
Lect. Notes in Math. {\bf 847}, Springer-Verlag, 1981.  

%\bibitem{KuLe}
%{L. Kushner, H. Levine and P. Porto, Mapping three-manifolds into the plane I},
%Bol. Soc. Mat. Mexicana {\bf 29} (1984), 11--33.


%\bibitem{Lev1}
%H. Levine,
%Elimination of cusps, 
%Topology {\bf 3} (1965) suppl. 2, 263--296.


%\bibitem{Lev}
%{H. Levine},
%{Classifying immersions into $\R^4$ over stable maps of 3-manifolds into
%$\R^2$}, Lect. Notes in Math. {\bf 1157}, Springer-Verlag, 1985.


%\bibitem{Liu}
%A. Liulevicius,  
%A theorem in homological algebra and stable homotopy of projective spaces,
%Trans. Amer. Math. Soc. {\bf 109} (1963), 540--552.



%\bibitem{MaLo}
%{W. Motta, P. Porto and O. Saeki},
%{Stable maps of 3-manifolds into the plane and their quotient spaces},
%Proc. London Math. Soc. {\bf 71} (1995), 158--174.



\bibitem{RSz}
{R. Rim\'anyi and A. Sz\H{u}cs},
{Generalized Pontrjagin-Thom construction for maps with singularities}, 
Topology {\bf 37} (1998), 1177--1191.


%\bibitem{Ro}
%V. Rohlin,
%Doklady Akad. Nauk SSSR (N.S.) {\bf 89} (1953), 789--792.


\bibitem{Sa1}
{O. Saeki},
Notes on the topology of folds,
J. Math. Soc. Japan {\bf 44} (1992), no. 3, 551--566.


%\bibitem{Sa2}
%{O. Saeki},
%Simple stable maps of $3$-manifolds into surfaces. II,
%J. Fac. Sci. Univ. Tokyo Sect. IA Math. {\bf 40} (1993), no. 1, 73--124. 


%\bibitem{Sa3}
%{O. Saeki},
%Stable maps and links in $3$-manifolds,
%Workshop on Geometry and Topology (Hanoi, 1993), Kodai Math. J. {\bf 17} (1994), no. 3, 518--529.


%\bibitem{Sa4}
%{O. Saeki},
%Simple stable maps of $3$-manifolds into surfaces,
%Topology {\bf 35} (1996), no. 3, 671--698.


%\bibitem{Sa5}
%O. Saeki,
%Fold maps on 4-manifolds,
%Comment. Math. Helv. {\bf 78} (2003), no. 3, 627--647.


%\bibitem{Sa}
%{O. Saeki},
%{Topology of singular fibers of differentiable maps},
%Lect. Notes in Math. {\bf 1854}, Springer-Verlag, 2004.


%\bibitem{SaYa}
%{O. Saeki and T. Yamamoto},
%{Singular fibers of stable maps and signatures of 4-manifolds},
%Geom. Topol. {\bf 10} (2006), 359--399.


%\bibitem{Sad}
%R. Sadykov,
%Elimination of singularities of smooth mappings of 4-manifolds into 3-manifolds,
%Topology Appl. {\bf 144} (2004), no. 1--3, 173--199.

\bibitem{Sad2}
R. Sadykov,
Bordism groups of solutions to differential relations,
arXiv:math.AT/0608460v1.


\bibitem{Sad3}
R. Sadykov,
Cobordism groups of Morin maps,
preprint.

%\bibitem{Saku}
%K. Sakuma,
%On the toplogy of simple fold maps,
%Tokyo J. Math. {\bf 17} (1994), no. 1, 21--31.


%\bibitem{SzaSzu}
%E. Szab\'o and A. Sz\H{u}cs,
%....

\bibitem{Szucs1}
{A. Sz\H{u}cs},
{Analogue of the Thom space for mappings with singularity of type $\Sigma^1$} 
(in Russian),
Math. Sb. (N.S.) {\bf 108 (150)} (1979), 433--456, 478;
English translation: Math. USSR-Sb. {\bf 36} (1980), 405--426.


\bibitem{Sz1}
{A. Sz\H{u}cs},
{Cobordism groups of immersions with restricted self-intersection}, Osaka
J. Math. {\bf 21} (1984), 71--80.


%\bibitem{Szucs3}
%{A. Sz\H{u}cs},
%Universal Singular Map, Coll. Math. Soc. J\'anos Bolyai 55. 
%Topology (P\'ecs, Hungary, 1989).




\bibitem{Sz2}
{A. Sz\H{u}cs},
{Topology of $\Si^{1,1}$-singular maps}, Math. Proc. Camb. Phil. Soc.
{\bf 121} (1997), 465--477.



\bibitem{Szucs2}
{A. Sz\H{u}cs},
{On the cobordism group of Morin maps},
Acta Math. Hungar. {\bf 80} (1998), 191--209.



\bibitem{Sz3}
{A. Sz\H{u}cs},
{Elimination of singularities by cobordism}, Contemporary Mathematics
{\bf 354} (2004), 301--324.


\bibitem{Szucs4}
{A. Sz\H{u}cs},
Cobordism of singular maps,
arXiv:math.GT/0612152v1.

%\bibitem{Wa}
%C. T. C. Wall,
%A second note on symmetry of singularities,
%Bull. London Math. Soc. {\bf 12} (1980), 347--354.


\bibitem{We}
{R. Wells},
{Cobordism of immersions}, Topology {\bf 5} (1966), 281--294.


%\bibitem{Ya}
%{T. Yamamoto},
%{Classification of singular fibres of stable maps from 4-manifolds to 3-manifolds and its applications},
%J. Math. Soc. Japan {\bf 58} (2006), No. 3, 721--742.


%\bibitem{Yo}
%{Y. Yonebayashi},
%Note on simple stable maps of $3$-manifolds into surfaces,
%Osaka J. Math. {\bf 36} (1999), no. 3, 685--709.

\end{thebibliography}
\end{document}